\title[Anti-flips of the blow-ups of 
the projective spaces]
{Anti-flips of the blow-ups of 
the projective spaces at torus invariant points}
\author{Hiroshi Sato} 
\author{Shigehito Tsuzuki} 
\subjclass[2020]{Primary 14M25; Secondary 14E05, 14J45.}
\date{2023/5/6, version 1.0}
\keywords{toric varieties, Fano varieties, 
blow-ups, anti-flips}
\address{Department of Applied Mathematics, Faculty of Sciences, 
Fukuoka University, 8-19-1, Nanakuma, Jonan-ku, Fukuoka 814-0180, Japan}
\email{hirosato@fukuoka-u.ac.jp}
\address{Department of Applied Mathematics, Faculty of Sciences, 
Fukuoka University, 8-19-1, Nanakuma, Jonan-ku, Fukuoka 814-0180, Japan}
\email{sd210005@cis.fukuoka-u.ac.jp}
\newcommand{\G}[0]{{\operatorname{G}}}
\newtheorem{thm}{Theorem}[section]
\newtheorem{lem}[thm]{Lemma}
\newtheorem{prop}[thm]{Proposition}
\theoremstyle{definition}
\newtheorem{ex}[thm]{Example}
\newtheorem{defn}[thm]{Definition}
\newtheorem{rem}[thm]{Remark}
\newtheorem*{ack}{Acknowledgments}       
\begin{document}
\bibliographystyle{amsalpha+}

\begin{abstract}
We explicitly construct the smooth toric Fano variety which is 
isomorphic to 
the blow-up of the projective space at torus invariant points 
in codimension one by anti-flips. %In particular, it has a budle structure 
%for sufficiently little torus invariant points. 
\end{abstract}

\thispagestyle{empty}

\maketitle

\tableofcontents
\section{Introduction} %%%%%%%%%%%%%%%%%%%%%%%%%
The blow-up of the projective plane $\mathbb{P}^2$ at $1$, $2$ or $3$ 
torus invariant points 
are isomorphic to 
the Hirzebruch surface $F_1$ of degree $1$, the del Pezzo surface $S_7$ of 
degree $7$ or the del Pezzo surface $S_6$ of degree $6$, respectively. 
As is well known, they are {\em Fano} varieties, 
that is, their anti-canonical divisors are ample. 
For $d\ge 3$, let $B^d_n$ be the blow-up of $\mathbb{P}^d$  at $n$ torus invariant points. 
Then, $B^d_1$ is a Fano variety, while $B^d_n$ is not a Fano variety for $n\ge 2$ 
(see e.g. \cite{bonavero}). 

In this paper, we construct the smooth Fano variety $\widetilde{B}^d_n$ which is 
birationally equivalent to $B^d_n$ 
by a finite succession of {\em anti-flips}. 
For this construction, we investigate the {\em primitive relations} for toric anti-flips. 
The construction of $\widetilde{B}^d_n$ is 
a generalization for the theory of pseudo-symmetric and symmetric 
toric varieties by Ewald \cite{ewald1} and Voskresenskij-Klyachko \cite{voskre}. 
%For sufficiently large $d$, $\widetilde{B}^d_n$ has a $(\mathbb{P}^1)^n$-bundle 
%structure over $\mathbb{P}^{d-n}$. 
The following is the main theorem of this paper. 

\begin{thm}[Theorem \ref{kekka}]\label{introkekka}
Let $B^d_n$ be the blow-up of $\mathbb{P}^d$ at $n$ torus invariant points. 
Suppose that $d\ge 3$ and $n\ge 2$. 

If $2n-1<d$ or $d$ is even, then there exists a finite succession 
$B^d_n\dashrightarrow \widetilde{B}^d_n$ 
of anti-flips such that $\widetilde{B}^d_n$ is a smooth toric Fano variety. 
More precisely, if $2n-1<d$, then $\widetilde{B}^d_n$ has a 
$(\mathbb{P}^1)^n$-bundle structure over $\mathbb{P}^{d-n}$, while 
otherwise every extremal ray of the Kleiman-Mori cone of $\widetilde{B}^d_n$ is of small type. 

If $2n-1\ge d$ and $d$ is odd, then there 
does not exist a {\em smooth} Fano variety which is isomorphic to 
$B^d_n$ in codimension one. 
\end{thm}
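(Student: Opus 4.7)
The strategy is to reduce the birational statements to a combinatorial problem on fans with a fixed set of rays. Choose coordinates so that $B^d_n$ corresponds to the fan in $\mathbb{Z}^d$ with rays $e_1,\dots,e_d$, $v_0=-e_1-\cdots-e_d$, and, after renaming the blown-up fixed points, $-e_1,\dots,-e_n$. A small $\mathbb{Q}$-factorial modification preserves the set of rays, so any smooth model of $B^d_n$ in codimension one is a complete smooth fan $\widetilde\Sigma$ on exactly these $d+1+n$ rays, and the Fano condition is detected by positivity of every primitive relation. Once a Fano $\widetilde\Sigma$ is produced, a sequence of anti-flips connecting $B^d_n$ and $\widetilde{B}^d_n$ is obtained by running the toric $(-K)$-MMP through the chambers of the movable cone: since the rays never change, every step is small, and moving toward the $(-K)$-ample chamber makes each step an anti-flip.

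For $2n-1<d$ I would take $\widetilde\Sigma$ to be the fan whose maximal cones are spanned by $\epsilon_1 e_1,\dots,\epsilon_n e_n$ (for any choice of signs $\epsilon_i\in\{\pm 1\}$) together with any maximal cone of the $\mathbb{P}^{d-n}$-fan on $e_{n+1},\dots,e_d,v_0$. Smoothness is a short determinant computation, the projection $\mathbb{Z}^d\to\mathbb{Z}^d/\langle e_1,\dots,e_n\rangle$ exhibits $\widetilde{B}^d_n$ as the claimed $(\mathbb{P}^1)^n$-bundle over $\mathbb{P}^{d-n}$ (nontrivially twisted, since $v_0$ lifts to $-e_1-\cdots-e_d$ rather than $-e_{n+1}-\cdots-e_d$), and the only primitive collections are $\{e_i,-e_i\}$ (of degree $2$) and $\{e_{n+1},\dots,e_d,v_0\}$, whose primitive relation $e_{n+1}+\cdots+e_d+v_0=(-e_1)+\cdots+(-e_n)$ has degree $d-2n+1$, which is positive exactly under the hypothesis. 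When $d$ is even and $2n-1\ge d$, the previous fan fails Fano-ness, and I would instead construct $\widetilde\Sigma$ by a symmetric pairing of the rays compatible with the $n$ blow-ups, in the spirit of Ewald's pseudo-symmetric varieties \cite{ewald1}, followed by a sequence of wall flips chosen so that each flipped wall has nonpositive $(-K)$-degree and every primitive relation of the resulting fan has strictly positive degree. Evenness of $d$ is essential for the pairing to close, and the main computational burden is verifying inductively that each intermediate fan remains smooth and Fano, with all surviving extremal rays of small type.

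The nonexistence assertion when $d$ is odd and $2n-1\ge d$ is what I expect to be the main obstacle. Because every candidate $\widetilde\Sigma$ shares the same rays and hence the same lattice relations, Fano positivity must fail by a global invariant. The plan is to exploit the absence of $-e_{n+1},\dots,-e_d$, which forces $v_0$ to be highly incident in any smooth completion, and to combine the unimodularity of each maximal cone with Fano positivity of every primitive collection containing $v_0$ to derive a parity equation whose two sides disagree precisely when $d$ is odd. Identifying this invariant -- I expect it to be an Euler-characteristic or Chern-number parity forced by smoothness of the fan, perhaps expressed as a constraint on the number of maximal cones incident to $v_0$ weighted by signs coming from the primitive relations $e_{n+j}+v_0+\sum_i(-e_i)=0$ up to rearrangement -- and verifying that it genuinely rules out every candidate $\widetilde\Sigma$ is the crux of the argument.
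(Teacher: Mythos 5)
Your treatment of the first case ($2n-1<d$) is sound but takes a genuinely different route from the paper. You guess the terminal fan directly --- it coincides with the paper's $\widetilde{B}^d_n=B^d_{n,n}$, whose only primitive relations are $x_i+y_i=0$ and $x_{n+1}+\cdots+x_{d+1}=y_1+\cdots+y_n$ of degree $d+1-2n>0$ --- and then invoke the movable-cone/Mori-dream-space machinery to produce the connecting chain of anti-flips. The paper never uses the MMP as a black box: Lemma \ref{induction} computes every intermediate fan explicitly through its primitive relations via Theorem \ref{mainthm}, anti-flipping the $\mathrm{C}_{n,r+1}$ negative relations of type $(\mathrm{III})$ one at a time. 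What the explicit route buys is the exact count of anti-flips ($2^n-n-1$ in this case) and, more importantly, the engine for the remaining two cases; what your route buys is brevity, at the cost of having to justify that the directed segment from an ample class to $-K$ crosses only small, $(-K)$-negative walls.

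The other two cases are where your proposal has genuine gaps. For $d$ even with $2n-1\ge d$ you only announce a plan (``a symmetric pairing of the rays \dots\ the main computational burden is verifying inductively \dots''): no fan is written down and no primitive relation is checked, so nothing is proved. The paper's answer is concrete: run the same induction up to $r=c=d/2$ and stop; the resulting $B^d_{n,c}$ has primitive relations of degrees $2$, $d+1-2c=1$ and $2c+1-d=1$, all positive, hence Fano, with the extremal rays of small type. For $d$ odd with $2n-1\ge d$ you explicitly concede that the nonexistence argument --- which you correctly identify as the crux --- is missing: the ``parity invariant'' is never produced. The paper's mechanism is that the induction necessarily reaches $r$ with $2r+1=d$, where the type $(\mathrm{III})$ relations have degree exactly $0$, so the process hits flopping rays and no small modification can have $-K$ ample. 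As it stands, the proposal establishes at best one of the three assertions of Theorem \ref{kekka}.
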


This paper is organized as follows: 
Section \ref{prepre} is devoted to the calculation of anti-flips 
by using the notion of primitive relations. This will be useful 
for the birational geometry of toric varieties. 
In Section \ref{mainsec}, we prove Theorem \ref{introkekka}. 
In each case, 
we can explicitly describe the number of anti-flips 
to obtain the smooth toric Fano variety (see Theorem \ref{kekka}).

\begin{ack}
The first author was partly supported by JSPS KAKENHI 
Grant Number JP18K03262. 
The authors thank the referee very much for many useful comments. 
They also thank Professor Osamu Fujino, who kindly answered 
their questions about minimal model theory. 
\end{ack}

\section{Preliminary}\label{prepre}%%%%%%%%%%%%%%%%%%%%%%
%%%%%%%%%%%%%%%%%%%%%%%%%%%%%%%%%

In this section, we quickly review the notion of 
primitive collections and relations for toric varieties 
introduced by Batyrev \cite{bat1} (see also 
\cite{bat2} and \cite{sato1}). They are convenient to 
describe the fan associated to a smooth complete 
toric variety. By using them, we can explicitly calculate 
some important operations in the birational geometry 
like blow-ups, blow-downs and anti-flips. 
%we describe the blow-ups of 
%$\mathbb{P}^d$ at $n$ torus invariant points 
%by using the toric geometry. We remark that 
%there exist exactly $d+1$ torus invariant 
%points in $\mathbb{P}^d$. Thus, we have 
%$n\le d+1$. 
For the basic theory of the 
toric geometry, see \cite{cls}, \cite{fulton} 
and \cite{oda}. Moreover, for the toric Mori theory, 
see \cite{fujino-sato}, \cite{matsuki} 
and \cite{reid}. 
We will work over an algebraically closed field $K=\overline{K}$. 

\smallskip

Let $X=X_{\Sigma}$ be the smooth projective 
toric $d$-fold associated to a fan $\Sigma$ 
in $N:=\mathbb{Z}^d$. Put
\[
\G(\Sigma):=\{\mbox{the primitive generators 
for $1$-dimensional cones in }\Sigma\}
\subset N.
\]
There is a one-to-one correspondence between 
$\G(\Sigma)$ and the set of torus invariant prime divisors on $X$. 
In particular, for another smooth projective toric $d$-fold $X'=X_{\Sigma'}$, 
if $\G(\Sigma)=\G(\Sigma')$, then $X$ and $X'$ are isomorphic 
in codimension one. 

The following notion is very important 
for our theory. 
\begin{defn}
A non-empty subset $P\subset\G(\Sigma)$ is a 
{\em primitive collction} of $\Sigma$ (or $X$) 
if 
\begin{enumerate}
\item 
$P$ does not generate a cone in $\Sigma$, while 
\item 
$P\setminus\{u\}$ generates a cone in 
$\Sigma$ for any $u\in\G(\Sigma)$. 
\end{enumerate}
For a primitive collection $P=\{u_1,\ldots,u_l\}$, 
there exists a unique cone $\sigma(P)\in\Sigma$ which 
contains $u_1+\cdots+u_l$ in its relative 
interior. Let $\{v_1,\ldots,v_m\}
\subset\G(\Sigma)$ be 
the generators for $\sigma(P)$ (in particular, $m$ is 
the dimension of $\sigma(P)$). Then, we have 
a linear relation
\[
u_1+\cdots+u_l=a_1v_1+\cdots+a_mv_m,
\]
where $a_1,\ldots,a_m$ are positive integers. 
We call this relation the {\em primitive relation} 
for $P=\{u_1,\ldots,u_l\}$. 
\end{defn}

It is well known that 
for any primitive collection of $X=X_\Sigma$, 
we can associate a numerical $1$-cycle on $X$ by 
using its primitive relation (see e.g. \cite{bat1}). 
In particular, the numerical $1$-cycles 
associated to the primitive collections of $X$ 
generate the Kleiman-Mori cone ${\rm NE}(X)$, 
which is always polyhedral when $X$ is a projective toric variety. 
So, we say that a primitive collection (or a primitive relation) 
is {\em extremal} if the associated numerical $1$-cycle 
generates an extremal ray $R\subset{\rm NE}(X)$. 
Thus, we obtain the extremal contraction $\varphi_R: X\to\overline{X}$ 
associated to an extremal primitive relation 
$u_1+\cdots+u_l=a_1v_1+\cdots+a_mv_m$. 
For the type of $\varphi_R$, the following hold: 
\begin{itemize}
\item 
If $m=0$, then $\varphi_R: X\to\overline{X}$ is a 
{\em Mori fiber space}. In this case, $\varphi_R$ is nothing but 
a $\mathbb{P}^{l-1}$-bundle structure over $\overline{X}$. 
\item 
If $m=1$, then $\varphi_R: X\to\overline{X}$ is a 
{\em divisorial contraction}. Moreover, if $a_1=1$, then 
$\varphi_R$ is a blow-up of $\overline{X}$ along a 
$(d-l)$-dimensional torus invariant subvariety. 
\item 
If $m\ge 2$, then $\varphi_R: X\to\overline{X}$ is a 
{\em small contraction}. Moreover, 
if $l-(a_1+\cdots+a_m)>0$ (resp. $< 0$, $=0$), then $\varphi_R$ is 
a {\em flipping} (resp. {\em anti-flipping}, {\em flopping}) contraction. 
%In each case, we can construct a flip (resp. anti-flip, flop)
For a flipping (resp. anti-flipping, flopping) contraction $\varphi_R:X\to\overline{X}$, 
we can construct a {\em flip} (resp. {\em anti-flip}, {\em flop})
 \[
\xymatrix{
X\ar[dr]_{\varphi_R}\ar@{-->}[rr]&& X^+\ar[dl]^{\varphi^+_R}\\
&\overline{X}&
}
\]
by the toric Mori theory. An anti-flip is the inverse operation of a flip, that is, 
if $X\dashrightarrow X^+$ 
is a flip, then its inverse rational map $X^+\dashrightarrow X$ is an anti-flip. 
\end{itemize}

We should remark that $\Sigma$ can be 
recovered by all the primitive relations of $\Sigma$. Namely, 
we can describe a fan by giving all the primitive relations of it. 

%\medskip

For blow-ups, the primitive collections can be 
calculated as follows:

\begin{prop}[\cite{sato1}, Theorem 4.3]\label{blowupprop}
Let $X=X_\Sigma$ be a smooth projective toric 
variety and $X'\to X$ be the blow-up with 
respect to an $l$-dimensional cone $\langle u_1,\ldots,u_l\rangle$ 
in $\Sigma$, 
where $\{u_1,\ldots,u_l\}\subset\G(\Sigma)$ $($remark that $l$ is the codimension of 
the center of the blow-up$)$. Put $v:=u_1+\cdots+u_l$. Then, the 
primitive collections of $X'$ are 
\begin{enumerate}
\item 
$\{u_1,\ldots,u_l\}$ $($whose primitive relation is $u_1+\cdots+u_l=v)$, 
\item 
any primitive collection $P$ in $\Sigma$ such that 
$\{u_1,\ldots,u_l\}\not\subset P$ and 
\item 
$(P\setminus\{u_1,\ldots,u_l\})\cup\{v\}$ 
for any primitive collection $P$ of  $\Sigma$ such that 
$P\setminus\{u_1,\ldots,u_l\}$ is a minimal element in 
\[
\left\{Q\setminus\{u_1,\ldots,u_l\}\,
\left|\,Q\mbox{ is a primitive collection of }
\Sigma,\ 
Q\cap\{u_1,\ldots,u_l\}\neq\emptyset\right.\right\}.
\]
\end{enumerate}
\end{prop}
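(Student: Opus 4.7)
The plan is to exploit the star-subdivision description of the blow-up: $\Sigma'$ is obtained from $\Sigma$ by adjoining the ray spanned by $v = u_1 + \cdots + u_l$ and subdividing every cone of $\Sigma$ containing $\tau := \langle u_1, \ldots, u_l\rangle$ as a face. My first step is to record the cone-criterion in $\Sigma'$: a subset $S\subset\G(\Sigma)$ generates a cone of $\Sigma'$ if and only if it generates a cone of $\Sigma$ and $\{u_1,\ldots,u_l\}\not\subset S$; and a subset $\{v\}\cup S$ with $S\subset\G(\Sigma)$ generates a cone of $\Sigma'$ if and only if $S\cup\{u_1,\ldots,u_l\}$ generates a cone of $\Sigma$ and $\{u_1,\ldots,u_l\}\not\subset S$ (simpliciality of $\Sigma$ rules out a cone containing both $v$ and all of $u_1,\ldots,u_l$). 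Everything else is bookkeeping on top of this criterion.

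With the criterion in hand, I verify that each of the three listed families is a primitive collection of $\Sigma'$. For (1), $\{u_1,\ldots,u_l\}$ fails to generate a cone of $\Sigma'$ because $\tau$ has been subdivided, while every proper subset is a face of $\tau$ and hence survives; the identity $u_1+\cdots+u_l=v$ lies in the relative interior of the new ray $\langle v\rangle$, giving the claimed primitive relation. For (2), if $P$ is primitive in $\Sigma$ with $\{u_1,\ldots,u_l\}\not\subset P$, then $P$ and each $P\setminus\{u\}$ trivially meet the exclusion, so the criterion transfers $P$ from $\Sigma$ to $\Sigma'$ unchanged. For (3), writing $Q = P\setminus\{u_1,\ldots,u_l\}$, I would check (a) $\{v\}\cup Q$ does not generate a cone of $\Sigma'$ because $Q\cup\{u_1,\ldots,u_l\}\supset P$ does not generate a cone of $\Sigma$, and (b) for every $u\in Q$ the set $(Q\setminus\{u\})\cup\{u_1,\ldots,u_l\}$ \emph{does} generate a cone of $\Sigma$. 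For (b) I would argue that any minimal non-generating subset inside $(Q\setminus\{u\})\cup\{u_1,\ldots,u_l\}$ is a primitive collection of $\Sigma$ that either meets $\{u_1,\ldots,u_l\}$---contradicting the minimality of $P\setminus\{u_1,\ldots,u_l\}=Q$---or lies inside $Q\setminus\{u\}\subsetneq Q$, which is already a face of $\Sigma$ since $Q$ is a proper subset of the primitive $P$; both are absurd.

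The converse---that every primitive collection of $\Sigma'$ falls into one of the three types---splits on whether $v$ is included. If not, the cone criterion forces the collection to be primitive in $\Sigma$, and requiring each $P\setminus\{u\}$ to generate a cone of $\Sigma'$ forces $\{u_1,\ldots,u_l\}\not\subset P$ unless $P=\{u_1,\ldots,u_l\}$, placing it in case (1) or (2). If $v$ is included, write the collection as $\{v\}\cup Q$; the cone criterion gives that $Q$ generates a cone of $\Sigma$ while $Q\cup\{u_1,\ldots,u_l\}$ does not, so one may select a primitive collection $P\subset Q\cup\{u_1,\ldots,u_l\}$ of $\Sigma$ with $P\cap Q$ as small as possible. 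Any $u\in Q\setminus P$ would force $P\subset(Q\setminus\{u\})\cup\{u_1,\ldots,u_l\}$, and since removing such an element of $Q$ from the large set keeps one in a superset of the face $Q$, a similar face/primitive analysis yields a contradiction; hence $Q\subset P$, so $P\setminus\{u_1,\ldots,u_l\}=Q$. The minimality of $Q$ among such differences is then forced by primitivity of $\{v\}\cup Q$, since a strictly smaller difference would produce a proper subset of $\{v\}\cup Q$ not generating a cone of $\Sigma'$. The main obstacle will be the bookkeeping in (3), especially making minimality of $P\setminus\{u_1,\ldots,u_l\}$ work in both directions; smoothness of $X$ is essential throughout, since it underlies simpliciality and hence the clean cone criterion at the outset.
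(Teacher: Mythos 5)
The paper does not actually prove this proposition --- it is quoted from \cite{sato1}, Theorem~4.3 --- so your argument can only be judged on its own terms. Your strategy (the star-subdivision cone criterion for $\Sigma'$, then pure combinatorial bookkeeping) is the standard and correct route, and the forward verification of families (1), (2), (3) as well as the converse for collections not containing $v$ are essentially sound. There is, however, one concrete gap in the converse for collections containing $v$: you write $P'=\{v\}\cup Q$ and tacitly treat $Q$ as disjoint from $\{u_1,\ldots,u_l\}$, but a priori a primitive collection of $\Sigma'$ could contain both $v$ and some $u_i$ (note that $\{v,u_i\}$ \emph{does} generate a cone of $\Sigma'$, so this is not excluded for free). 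If some $u_i\in Q$, then your key step ``any $u\in Q\setminus P$ forces $P\subset(Q\setminus\{u\})\cup\{u_1,\ldots,u_l\}$, contradiction'' breaks for $u=u_i$, because $(Q\setminus\{u_i\})\cup\{u_1,\ldots,u_l\}=Q\cup\{u_1,\ldots,u_l\}$ does \emph{not} generate a cone, so no contradiction arises and you only get $Q\setminus\{u_1,\ldots,u_l\}\subset P$; the identification $P\setminus\{u_1,\ldots,u_l\}=Q$ then also fails. The fix is short: since $Q$ and $\{v\}\cup(Q\setminus\{u_i\})$ are proper subsets of the primitive collection $\{v\}\cup Q$, they generate cones of $\Sigma'$, which by your own criterion forces $\{u_1,\ldots,u_l\}\not\subset Q$ and $Q\cup\{u_1,\ldots,u_l\}=(Q\setminus\{u_i\})\cup\{u_1,\ldots,u_l\}$ to generate a cone of $\Sigma$; but then $\{v\}\cup Q$ itself satisfies the criterion and generates a cone, a contradiction. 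So $Q\cap\{u_1,\ldots,u_l\}=\emptyset$, and your argument resumes.

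Two smaller points. First, the sentence ``removing such an element of $Q$ from the large set keeps one in a superset of the face $Q$'' is not right as stated ($(Q\setminus\{u\})\cup\{u_1,\ldots,u_l\}$ does not contain $Q$); the contradiction you actually want is that $(Q\setminus\{u\})\cup\{u_1,\ldots,u_l\}$ \emph{generates a cone of $\Sigma$} --- because $\{v\}\cup(Q\setminus\{u\})$ is a proper subset of the primitive collection $\{v\}\cup Q$ and hence a cone of $\Sigma'$ --- so the primitive collection $P$ would lie inside a cone-generating set. Once phrased this way, the auxiliary choice of $P$ with ``$P\cap Q$ as small as possible'' is unnecessary: \emph{every} primitive collection $P\subset Q\cup\{u_1,\ldots,u_l\}$ must contain $Q$. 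Second, in the forward check of (3) you should also record that $Q$ itself (the proper subset obtained by deleting $v$) generates a cone of $\Sigma'$; this is immediate since $Q\subsetneq P$, but it is one of the proper subsets that must be verified.
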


%By applying Proposition \ref{blowupprop} $n$ times, 
%we can calculate the primitive relations of $n$ points blow-up of 
%$\mathbb{P}^d$.
Conversely, we can calculate the primitive collections of a blow-down of 
a smooth projective toric variety. 

\begin{prop}[\cite{sato1}, Corollary 4.9]\label{blowdownprop}
Let $X=X_{\Sigma}$ be a smooth projective toric variety 
and $X\to \overline{X}$ the blow-down with 
respect to an extremal primitive relation
\[
u_1+\cdots+u_l=v
\]
of $X$. 
Then, the primitive collections of $\overline{X}$ 
are 
\begin{enumerate}
\item any primitive collection $P$ 
of $\Sigma$ 
such that $P\neq\{u_1,\ldots,u_l\}$ and 
$v\not\in P$, and 
\item 
$(P\setminus\{v\})\cup\{u_1,\ldots,u_l\}$ 
for any primitive collection $P$ of $\Sigma$ 
such that $v\in P$ and $(P\setminus \{v\})\cup 
S$ is not a primitive collection of $\Sigma$ for 
any proper subset $S\subset \{u_1,\ldots,u_l\}$. 
\end{enumerate}
\end{prop}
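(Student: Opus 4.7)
The plan is to invert Proposition \ref{blowupprop}, since the blow-down $X\to\overline{X}$ along the extremal primitive relation $u_1+\cdots+u_l=v$ is, by definition, the inverse of the star subdivision of $\overline{\Sigma}$ at the cone $\langle u_1,\ldots,u_l\rangle$. In particular $\G(\overline{\Sigma})=\G(\Sigma)\setminus\{v\}$, and each cone of $\overline{\Sigma}$ is either a cone of $\Sigma$ not having $v$ as a generator, or is obtained from a cone of $\Sigma$ that has $v$ as a generator by replacing $v$ with the whole set $\{u_1,\ldots,u_l\}$.

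The first technical step is a cone-generation lemma: for any $S\subset\G(\Sigma)\setminus\{v\}$, the set $S$ generates a cone in $\overline{\Sigma}$ if and only if either $(a)$ $\{u_1,\ldots,u_l\}\not\subset S$ and $S$ generates a cone in $\Sigma$, or $(b)$ $\{u_1,\ldots,u_l\}\subset S$ and $(S\setminus\{u_1,\ldots,u_l\})\cup\{v\}$ generates a cone in $\Sigma$. The two cases are exclusive, because $\{u_1,\ldots,u_l\}$ is itself a primitive collection of $\Sigma$ and so fails to sit inside any cone of $\Sigma$. Using this lemma, I would verify both directions of the proposition case by case. Given a primitive collection $Q$ of $\overline{\Sigma}$: if $\{u_1,\ldots,u_l\}\not\subset Q$, then the lemma forces $Q$ and each $Q\setminus\{q\}$ to be controlled by clause $(a)$, whence $Q$ is a primitive collection of $\Sigma$ with $v\notin Q$ and $Q\neq\{u_1,\ldots,u_l\}$, producing case (1); if $\{u_1,\ldots,u_l\}\subset Q$, one sets $P:=(Q\setminus\{u_1,\ldots,u_l\})\cup\{v\}$ and checks, using that each $Q\setminus\{q\}$ generates a cone in $\overline{\Sigma}$, that $P$ is a primitive collection of $\Sigma$ containing $v$ which satisfies the minimality clause of (2). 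The reverse direction is symmetric: for $P$ as in (2), the lemma shows that $(P\setminus\{v\})\cup\{u_1,\ldots,u_l\}$ fails to generate a cone in $\overline{\Sigma}$ (because $P$ itself fails to generate one in $\Sigma$), and removing either a single $u_i$ or any element of $P\setminus\{v\}$ is then checked to yield a cone in $\overline{\Sigma}$.

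The main obstacle I expect is handling the minimality clause in case (2): one must argue that the condition ``$(P\setminus\{v\})\cup S$ is not a primitive collection of $\Sigma$ for any proper $S\subsetneq\{u_1,\ldots,u_l\}$'' is precisely the translation, under the cone-generation lemma, of the requirement that removing a single $u_i$ from $(P\setminus\{v\})\cup\{u_1,\ldots,u_l\}$ produces a set that already generates a cone in $\overline{\Sigma}$. In contrast to Proposition \ref{blowupprop}, where primitive collections split cleanly upon blow-up, collapsing $l$ rays into a single ray $v$ may produce several candidate proper subsets that fail to generate a cone in $\overline{\Sigma}$, and sieving these out requires the full strength of the stated minimality hypothesis together with the primitivity of $P$ in $\Sigma$.
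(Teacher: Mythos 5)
First, a point of comparison: the paper does not actually prove Proposition \ref{blowdownprop} --- it is imported verbatim from \cite{sato1}, Corollary 4.9 --- so there is no in-paper argument to measure yours against, and your proposal has to stand on its own. Your strategy (realize $\Sigma$ as the star subdivision of $\overline{\Sigma}$ at $\langle u_1,\ldots,u_l\rangle$, prove a dictionary between cone-generating subsets of $\G(\overline{\Sigma})$ and of $\G(\Sigma)$, and push the definition of primitive collection through it) is the natural one. Your cone-generation lemma is correct, and case (1) together with the forward direction of case (2) follow from it by routine checks --- modulo one small observation you should record explicitly, namely that a primitive collection $P$ of $\Sigma$ with $v\in P$ is automatically disjoint from $\{u_1,\ldots,u_l\}$ (if some $u_i\in P$, then since $P\setminus\{u_i\}$ generates a cone of $\Sigma$ containing $v$ while $P$ does not, the dictionary forces $\{u_1,\ldots,u_l\}\subset P\setminus\{v\}$, and then $P\setminus\{v\}$ would be a cone of $\Sigma$ containing the primitive collection $\{u_1,\ldots,u_l\}$, a contradiction). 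Without this, the correspondence $P\leftrightarrow (P\setminus\{v\})\cup\{u_1,\ldots,u_l\}$ is not even well posed.

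The genuine gap sits exactly where you predict trouble, and you do not close it. For $Q:=(P\setminus\{v\})\cup\{u_1,\ldots,u_l\}$ to be a primitive collection of $\overline{\Sigma}$, your lemma requires each $Q\setminus\{u_i\}=(P\setminus\{v\})\cup\bigl(\{u_1,\ldots,u_l\}\setminus\{u_i\}\bigr)$ to \emph{generate a cone} of $\Sigma$, i.e.\ to contain \emph{no} primitive collection of $\Sigma$; but the hypothesis in (2) only says that no set of the form $(P\setminus\{v\})\cup S$ \emph{is itself} a primitive collection. A priori $Q\setminus\{u_i\}$ could contain a primitive collection $R$ using only part of $P\setminus\{v\}$, about which the hypothesis is silent. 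The missing argument runs as follows. Such an $R$ must meet both $P\setminus\{v\}$ and $\{u_1,\ldots,u_l\}$ nontrivially, since a subset of either side alone is a proper subset of a primitive collection of $\Sigma$ and hence generates a cone. If $R_1:=R\cap(P\setminus\{v\})$ were a \emph{proper} subset of $P\setminus\{v\}$, then $R_1\cup\{v\}\subsetneq P$ generates a cone of $\Sigma$; by the dictionary $R_1\cup\{u_1,\ldots,u_l\}$ then generates a cone of $\overline{\Sigma}$, so its subset $R$ generates a cone of $\overline{\Sigma}$ not containing all of the $u_i$, hence a cone of $\Sigma$ --- contradicting the primitivity of $R$. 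Therefore $R\supset P\setminus\{v\}$, i.e.\ $R=(P\setminus\{v\})\cup S$ for some proper $S\subset\{u_1,\ldots,u_l\}$, and only now does the stated hypothesis apply and finish the proof. Note that this step needs the two-way version of the dictionary (cones of $\Sigma$ containing $v$ correspond to cones of $\overline{\Sigma}$ containing $\{u_1,\ldots,u_l\}$), which your lemma, being formulated only for subsets of $\G(\overline{\Sigma})$, does not supply as stated.
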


By combining Propositions \ref{blowupprop} and \ref{blowdownprop}, 
we obtain the following. This theorem is essential for the calculations in 
Section \ref{mainsec}. 

\begin{thm}\label{mainthm} %%%%%%%%%%%%%%%%%%%%%%%%%
Let $X=X_\Sigma$ be a smooth projective toric variety and %$X^+=X_{\Sigma^+}$
$X\dashrightarrow X^+=X_{\Sigma^+}$ the anti-flip with respect to an 
extremal primitive relation 
\[
u_1+\cdots+u_l=v_1+\cdots+v_m\ (l<m)
\]
of $\Sigma$, where $\{u_1,\ldots,u_l,v_1,\ldots,v_m\}\subset\G(\Sigma)$. 
Then, $X^+$ is also a {\em smooth} projective toric variety 
and the primitive collections of $\Sigma^+$ are 
\begin{enumerate}
\item $\{v_1,\ldots,v_m\}$ whose 
primitive relation is 
\[
v_1+\cdots+v_m=u_1+\cdots+u_l,
\]
\item any primitive collection $P$ of $\Sigma$ such that 
$\{v_1,\ldots,v_m\}\not\subset P$ and $P\neq\{u_1,\ldots,u_l\}$, and 
\item 
$\left(P\setminus\{v_1,\ldots,v_m\}\right)\cup\{u_1,\ldots,u_l\}$ 
for any primitive collection $P$ of $\Sigma$ such that 
$P\setminus\{v_1,\ldots,v_m\}$ is a 
minimal element in 
\[
\left\{P\setminus\{v_1,\ldots,v_m\}\,\left|\,P\mbox{ is a primitive collection of }\Sigma,\ 
P\cap\{v_1,\ldots,v_m\}\neq\emptyset\right.\right\}
\]
and $(P\setminus\{v_1,\ldots,v_m\})\cup S$ does not contain a primitive collection for any proper subset 
$S\subset\{u_1,\ldots,u_l\}$. 
\end{enumerate}
\end{thm}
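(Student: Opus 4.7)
The plan is to construct the anti-flip explicitly by factoring it through a common toric refinement, then applying Propositions~\ref{blowupprop} and~\ref{blowdownprop} in succession. Since $u_1+\cdots+u_l=v_1+\cdots+v_m$ is a primitive relation of $\Sigma$, the set $\{v_1,\ldots,v_m\}$ spans an $m$-dimensional cone $\sigma\in\Sigma$, and the vector $w:=v_1+\cdots+v_m=u_1+\cdots+u_l$ lies in its relative interior. Let $\widetilde{\Sigma}$ be the star subdivision of $\Sigma$ at $\sigma$, obtained by inserting the ray generated by $w$. Then $\widetilde{X}:=X_{\widetilde{\Sigma}}\to X$ is the blow-up of $X$ along the smooth torus-invariant subvariety corresponding to $\sigma$, so $\widetilde{X}$ is smooth and projective.

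I would first apply Proposition~\ref{blowupprop} (with the roles of the $u$'s and $v$'s exchanged, as the blow-up center here is $\sigma=\langle v_1,\ldots,v_m\rangle$) to list the primitive collections of $\widetilde{\Sigma}$. The collection $\{v_1,\ldots,v_m\}$ appears with the new primitive relation $v_1+\cdots+v_m=w$, and the original primitive collection $\{u_1,\ldots,u_l\}$ of $\Sigma$ survives but with the updated primitive relation $u_1+\cdots+u_l=w$, since its sum now lies on a genuine ray. I would verify that this latter relation is extremal in $\widetilde{\Sigma}$ and corresponds to the divisorial contraction $\widetilde{X}\to X^+$ of the exceptional divisor associated with $w$. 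Applying Proposition~\ref{blowdownprop} with respect to this relation then produces a smooth projective toric variety $X^+=X_{\Sigma^+}$, and the common-refinement picture makes $X\dashrightarrow X^+$ an isomorphism in codimension one with the expected sign change on the flipped curve class, so it is the desired anti-flip.

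The three types of primitive collections of $\Sigma^+$ in the statement arise by composing Proposition~\ref{blowupprop}(1)--(3) with Proposition~\ref{blowdownprop}(1)--(2): case~(1) comes from pushing the relation $v_1+\cdots+v_m=w$ through the blow-down; case~(2) collects primitive collections of $\Sigma$ untouched by either modification; and case~(3) is the composite of Proposition~\ref{blowupprop}(3) and Proposition~\ref{blowdownprop}(2), which explains why both a minimality condition and the extra clause ``$(P\setminus\{v_1,\ldots,v_m\})\cup S$ contains no primitive collection for any proper $S\subset\{u_1,\ldots,u_l\}$'' appear together. The main obstacle will be the combinatorial bookkeeping for case~(3): a primitive collection $P$ of $\Sigma$ with $P\setminus\{v_1,\ldots,v_m\}$ minimal first becomes $(P\setminus\{v_1,\ldots,v_m\})\cup\{w\}$ in $\widetilde{\Sigma}$ and is then transformed at the blow-down stage into $(P\setminus\{v_1,\ldots,v_m\})\cup\{u_1,\ldots,u_l\}$ precisely when no proper subset of $\{u_1,\ldots,u_l\}$ already suffices. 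Showing that these two conditions together recover exactly case~(3), without duplicates or omissions, is where the delicate case analysis lies.
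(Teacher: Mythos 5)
Your proposal is correct and follows essentially the same route as the paper: the paper's proof likewise factors the anti-flip as the blow-up of $X$ along the cone $\langle v_1,\ldots,v_m\rangle$ followed by the blow-down with respect to the extremal primitive relation $u_1+\cdots+u_l=v$ with $v:=v_1+\cdots+v_m$, and then invokes Propositions \ref{blowupprop} and \ref{blowdownprop}. Your additional bookkeeping for how the three cases arise from composing the two propositions is consistent with (and somewhat more explicit than) the paper's two-sentence argument.
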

\begin{proof}
$X^+$ is obtained by blowing-up $X$ along 
the torus invariant subvariety associated to 
the cone $\langle v_1,\ldots,v_m\rangle$ 
and by blowing-down with respect to the extremal primitive relation 
\[
u_1+\cdots+u_l=v,
\]
where $v:=v_1+\cdots+v_m$. Therefore, we can apply Propositions 
\ref{blowupprop} and \ref{blowdownprop}.
\end{proof}

\begin{rem}\label{flopdemo}
%In Theorem \ref{mainthm}, the fact $X\dashrightarrow X^+$ is an anti-flip 
%only means $l<m$. 
%Therefore, 
Obviously, Theorem \ref{mainthm} is valid for smooth 
flips (the case where $l>m$) and smooth flops (the case where $l=m$) as well. 
\end{rem}

Low-dimensional examples for this calculation are given in Section \ref{mainsec} 
(see Examples \ref{4dimexample} and \ref{3dimexample}). 

%\begin{ex}
%\end{ex}

We end this section by giving a characterization of 
{\em Fano} varieties using the notion of 
primitive relations for the reader's convenience:

\begin{prop}[see e.g. \cite{bat2}]\label{fanohantei}
Let $X=X_\Sigma$ be a smooth projective toric 
variety. Then $X$ is a {\em Fano} variety $($resp. {\em weak Fano} variety$)$ if and only if 
for any primitive relation
\[
u_1+\cdots+u_l=a_1v_1+\cdots+a_mv_m
\]
of $\Sigma$, $l-(a_1+\cdots+a_m)>0$ $($resp. $\ge 0$$)$ holds. 
We call $l-(a_1+\cdots+a_m)$ the {\em degree} of 
the primitive collection $($or relation$)$. 
\end{prop}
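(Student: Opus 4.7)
The plan is to deduce the statement from the Kleiman ampleness criterion combined with Batyrev's theorem (recalled in the excerpt) that the numerical $1$-cycles associated to primitive collections generate $\NE(X)$. By definition $X$ is Fano iff $-K_X$ is ample, and for a smooth complete toric variety $-K_X=\sum_{u\in\G(\Sigma)}D_u$, where $D_u$ denotes the torus invariant prime divisor corresponding to $u$. Thus the Fano condition is equivalent to $(-K_X)\cdot C_P>0$ for every primitive collection $P$ of $\Sigma$, where $C_P$ is the associated $1$-cycle.

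Next I would pin down the intersection numbers directly from the primitive relation. For a smooth complete toric variety there is an exact sequence
\[
0\longrightarrow N_1(X)\longrightarrow \mathbb{Z}^{\G(\Sigma)}\longrightarrow N\longrightarrow 0,
\]
in which the right-hand map sends the standard basis vector indexed by $u$ to $u\in N$. A linear relation $\sum n_u u=0$ among the elements of $\G(\Sigma)$ therefore yields a class in $N_1(X)$ whose intersection number with $D_u$ equals the coefficient $n_u$. Applied to the primitive relation $u_1+\cdots+u_l-a_1v_1-\cdots-a_mv_m=0$, the associated class $C_P$ satisfies $D_{u_i}\cdot C_P=1$ for each $i$, $D_{v_j}\cdot C_P=-a_j$ for each $j$, and $D_w\cdot C_P=0$ for every other $w\in\G(\Sigma)$.

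Summing these intersection numbers over all $u\in\G(\Sigma)$ yields at once
\[
(-K_X)\cdot C_P=l-(a_1+\cdots+a_m),
\]
so the Kleiman criterion translates exactly into the positivity of the degree of every primitive relation. The only mildly delicate point is the reduction to primitive collections: \emph{a priori} the Kleiman criterion requires positivity on the whole of $\NE(X)$, but since the classes $[C_P]$ generate this cone it suffices to test on primitive collections (and in fact only on the extremal ones, which are enough to cut out the cone). No serious obstacle arises; once the dictionary between primitive relations and $1$-cycles on $X$ is recorded, the proposition reduces to the one-line intersection computation above.
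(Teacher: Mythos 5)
The paper states this proposition without proof, simply citing Batyrev \cite{bat2}, so there is no internal argument to compare against; your proof is the standard one and is correct. The identification of $N_1(X)$ with the lattice of relations among the elements of $\G(\Sigma)$ does give $(-K_X)\cdot C_P=l-(a_1+\cdots+a_m)$, and since the classes $[C_P]$ are nonzero elements generating the strongly convex polyhedral cone $\NE(X)$ (Batyrev's theorem, already recalled in the paper), Kleiman's criterion for $-K_X=\sum_{u\in\G(\Sigma)}D_u$ reduces exactly to the positivity of the degrees of all primitive relations.
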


\section{Blow-ups and anti-flips}\label{mainsec}%%%%%%%%%%%%%%
%%%%%%%%%%%%%%%%%%%%%%%%%%%%%%
%%%%%%%%%%%%%%%%%%%%%%
%%%%%%%%%%
%%
First, we give the description of the fans associated to the projective spaces 
and their blow-ups at torus invariant points. 
We will use this notation throughout this section. 

For any natural number $d$, the $d$-dimensional projective space $\mathbb{P}^d$ is 
the simplest complete toric $d$-fold whose fan $\Sigma^d_0$ is described as follows: 
Let $\{e_1,\ldots,e_d\}$ be the standard basis for $N:=\mathbb{Z}^d$, and put 
$x_1:=e_1,\ldots,x_d:=e_d,x_{d+1}:=-(e_1+\cdots+e_d)$. 
Then, $\Sigma^d_0$ has the unique primitive collection with 
the following primitive relation 
\[
x_1+\cdots+x_{d+1}=0,
\]
where $\G(\Sigma^d_0)=\{x_1,\ldots,x_{d+1}\}$. Namely, 
$\mathbb{P}^d$ can be expressed by 
this only one simple equality. 

Suppose that $d\ge 2$, and 
let $\pi:B^d_n\to \mathbb{P}^d$ be the blow-up of $\mathbb{P}^d$ 
at $n$ torus invariant points for $1\le n\le d+1$ and 
$\Sigma^d_n$ the fan associated to $B^d_n$. Here, we should remark that 
$\mathbb{P}^d$ has exactly $d+1$ torus invariant points. 
%, and let $n\le d+1$ be a natural number. 
We may assume $n\ge 2$, since $B^d_1$ itself is a Fano manifold. 
By using Proposition \ref{blowupprop} $n$ times, we obtain the following.
\begin{prop}\label{blowupprim}
The primitive relations 
of $\Sigma^d_n$ are 
\[
x_i+y_i=0\ (1\le i\le n),\ x_1+\cdots+\check{x}_i+\cdots+x_{d+1}=y_i\ (1\le i\le n)\mbox{ and }
\]
\[
y_i+y_j=x_1+\cdots+\check{x}_i+\cdots+\check{x}_j+\cdots+x_{d+1}\ (1\le i<j\le n),
\]
where $\G(\Sigma^d_n)=\{x_1,\ldots,x_{d+1},y_1,\ldots,y_n\}$. 
In particular, $\Sigma^d_n$ has exactly $\frac{n(n+3)}{2}$ primitive collections. 
\end{prop}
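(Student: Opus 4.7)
The plan is to proceed by induction on $n$, starting from the fan $\Sigma^d_0$ of $\mathbb{P}^d$ (with its unique primitive relation $x_1 + \cdots + x_{d+1} = 0$) and blowing up the torus invariant points one at a time; by the symmetry of $\Sigma^d_0$ I may assume the $i$-th blow-up center is the $d$-dimensional cone $\langle x_1, \ldots, \check{x}_i, \ldots, x_{d+1}\rangle$. The new ray introduced at step $i$ is by construction
\[
y_i \;=\; x_1 + \cdots + \check{x}_i + \cdots + x_{d+1},
\]
which, thanks to the $\mathbb{P}^d$ relation, simplifies to $y_i = -x_i$.

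This identity $y_i = -x_i$ is the crux: it immediately forces
\[
x_i + y_i = 0,\qquad y_i + y_j = -x_i - x_j = x_1 + \cdots + \check{x}_i + \cdots + \check{x}_j + \cdots + x_{d+1},
\]
so once I have identified which subsets are primitive collections, the claimed primitive relations drop out for free.

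For the inductive step, assume the description holds for $\Sigma^d_{n-1}$ and blow up along the cone generated by $C_n := \{x_1,\ldots,x_{d+1}\}\setminus\{x_n\}$, invoking Proposition \ref{blowupprop}. Part (1) produces the new primitive collection $C_n$ with relation $\sum_{j\ne n}x_j = y_n$. For part (2), every old primitive collection survives: $\{x_i,y_i\}$ and $\{y_i,y_j\}$ each contain a $y$-ray and so cannot contain $C_n$ (whose elements are all $x$-rays), while $C_n\not\subset C_i$ for $i<n$ since $x_i\in C_n\setminus C_i$. For part (3), the old primitive collections meeting $C_n$ are exactly $\{x_i,y_i\}$ for $i<n$ (giving $P\setminus C_n = \{y_i\}$) and $C_i$ for $i<n$ (giving $P\setminus C_n = \{x_n\}$); all of these are singletons, hence automatically minimal, and adjoining $y_n$ produces the new collections $\{y_i,y_n\}$ for $i<n$ together with $\{x_n,y_n\}$. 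Collating the three families and adding $n + n + \binom{n}{2} = \tfrac{n(n+3)}{2}$ completes the step.

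The main obstacle I anticipate is purely combinatorial bookkeeping inside part (3): one must notice that the $n-1$ distinct choices $P = C_i$ collapse to the single new primitive collection $\{x_n,y_n\}$, and verify that the minimality hypothesis of Proposition \ref{blowupprop} is automatic here because every relevant $P\setminus C_n$ is already a singleton (and the subsidiary condition on proper subsets $S$ is vacuous since $l=1$ in each application). Beyond this, the verification is entirely routine.
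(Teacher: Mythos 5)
Your proposal is correct and is exactly the paper's approach: the paper gives no proof beyond the one-line remark ``By using Proposition \ref{blowupprop} $n$ times, we obtain the following,'' and your induction is just that remark carried out in detail (correctly, including the observation that the $n-1$ collections $C_i$ all collapse to the single new collection $\{x_n,y_n\}$ in part (3)). The only slight blemish is your parenthetical about a ``subsidiary condition on proper subsets $S$,'' which belongs to Proposition \ref{blowdownprop}, not \ref{blowupprop}, but it is vacuous here and does not affect the argument.
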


The main purpose of this paper is 
to construct the smooth toric Fano variety $\widetilde{B}^d_n$ 
associated to the fan $\widetilde{\Sigma}^d_n$ 
such that $\G(\widetilde{\Sigma}^d_n)=\G(\Sigma^d_n)$, that is,  
$\widetilde{B}^d_n$ and $B^d_n$ are isomorphic in 
codimension one. 
$B^2_2$ and $B^2_3$ themselves are del Pezzo surfaces, 
so we assume $d\ge 3$. Proposition \ref{fanohantei} tells us that 
$B^d_n$ is not a Fano variety for $d\ge 3$. We use the notation 
$\mathrm{C}_{n,r}=\frac{n!}{r!(n-r)!}$ for $1\le r\le n$. 
\begin{lem}\label{induction}%%%%%%%%%%%%%%%%%%%%%%%%%%%%
Let $1\le r\le n-1$. 
Suppose that $2r+1<d$ and 
there exists a smooth projective toric $d$-fold $B^d_{n,r}$ 
associated to the fan $\Sigma^d_{n,r}$ such that the primitive relations 
of $\Sigma^d_{n,r}$ are 
\[
(\mathrm{I})\ x_i+y_i=0\ (1\le i\le n),
\]
\[
(\mathrm{I\hspace{-.01em}I})\ \sum_{x\in\{x_1,\ldots,x_{d+1}\}\setminus\{x_{i_1},\ldots,x_{i_r}\}}x=y_{i_1}+\cdots+y_{i_r}\ (1\le i_1<\cdots<i_r\le n)\mbox{ and }
\]
\[
(\mathrm{I\hspace{-.01em}I\hspace{-.01em}I})\ y_{j_1}+\cdots+y_{j_{r+1}}=
\sum_{x\in\{x_1,\ldots,x_{d+1}\}\setminus\{x_{j_1},\ldots,x_{j_{r+1}}\}}x\ 
(1\le j_1<\cdots<j_{r+1}\le n),
\]
where $\G(\Sigma^d_{n,r})=\G(\Sigma^d_n)=\{x_1,\ldots,x_{d+1},y_1,\ldots,y_n\}$. 
Then, there exists a sequence of smooth anti-flips
\[
B^d_{n,r}=:B^d_{n,r}(0)\dashrightarrow B^d_{n,r}(1)
\dashrightarrow\cdots\dashrightarrow B^d_{n,r}(\mathrm{C}_{n,r+1})
=:B^d_{n,r+1}
\]
such that the primitive relations of 
the fan $\Sigma^d_{n,r+1}$ associated to $B^d_{n,r+1}$ are
\[
x_i+y_i=0\ (1\le i\le n),
\]
\[
\sum_{x\in\{x_1,\ldots,x_{d+1}\}\setminus\{x_{j_1},\ldots,x_{j_{r+1}}\}}x=
y_{j_1}+\cdots+y_{j_{r+1}}\ (1\le j_1<\cdots<j_{r+1}\le n)\mbox{ and }
\]
\[
y_{k_1}+\cdots+y_{k_{r+2}}=
\sum_{x\in\{x_1,\ldots,x_{d+1}\}\setminus\{x_{k_1},\ldots,x_{k_{r+2}}\}}x\ 
(1\le k_1<\cdots<k_{r+2}\le n),
\]
where $\G(\Sigma^d_{n,r+1})=\G(\Sigma_n)=\{x_1,\ldots,x_{d+1},y_1,\ldots,y_n\}$. 
%and $\mathrm{C}_{n,r}:=\frac{n!}{r!(n-r)!}$. 
\end{lem}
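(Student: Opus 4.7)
The plan is to anti-flip, one at a time, each of the $\mathrm{C}_{n,r+1}$ extremal primitive relations of type (III) of $\Sigma^d_{n,r}$, and to verify by induction that the primitive relations of the intermediate fans interpolate cleanly between those of $\Sigma^d_{n,r}$ and $\Sigma^d_{n,r+1}$.

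First, I would check that each (III)-relation $y_{j_1}+\cdots+y_{j_{r+1}}=\sum_{k\notin J}x_k$ has $r+1$ summands on the left and $d-r$ on the right; since $2r+1<d$, we have $r+1<d-r$, so by the remark after Theorem \ref{mainthm} this relation does define an anti-flip. Extremality follows since its degree $2r+1-d$ is strictly negative, and Proposition \ref{fanohantei} together with the explicit list of primitive relations in the hypothesis rules out writing it as a nontrivial nonnegative combination of the others.

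Next, I would order the $(r+1)$-subsets $J_1,\ldots,J_N$ (with $N=\mathrm{C}_{n,r+1}$) so that, for every $(r+2)$-subset $K\subset\{1,\ldots,n\}$, there is a well-defined ``last'' $(r+1)$-subset of $K$ appearing in the sequence. Setting $B^d_{n,r}(t)$ to be the anti-flip of $B^d_{n,r}(t-1)$ with respect to the relation indexed by $J_t$, I would prove by induction on $t$, applying Theorem \ref{mainthm} at each stage, that the primitive collections of the fan $\Sigma^d_{n,r}(t)$ associated to $B^d_{n,r}(t)$ are exactly:
(A) $\{x_i,y_i\}$ for $i=1,\ldots,n$;
(B) $\{x_k:k\notin I\}$ for $r$-subsets $I\subset\{1,\ldots,n\}$ with $I\not\subset J_s$ for any $s\le t$;
(C) $\{x_k:k\notin J_s\}$ for $s\le t$, with relation $\sum_{k\notin J_s}x_k=\sum_{j\in J_s}y_j$;
(D) $\{y_j:j\in J\}$ for $(r+1)$-subsets $J\notin\{J_1,\ldots,J_t\}$;
(E) $\{y_j:j\in K\}$ for each $(r+2)$-subset $K$ all of whose $(r+1)$-subsets lie in $\{J_1,\ldots,J_t\}$, with relation $\sum_{j\in K}y_j=\sum_{k\notin K}x_k$.

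The main obstacle is case (3) of Theorem \ref{mainthm}: one must identify which primitive collections $P$ of $\Sigma^d_{n,r}(t-1)$ meeting $V_t:=\{x_k:k\notin J_t\}$ also satisfy the condition that $(P\setminus V_t)\cup S$ contains no primitive collection of $\Sigma^d_{n,r}(t-1)$ for any proper $S\subset U_t:=\{y_j:j\in J_t\}$. A case check shows that the minimal $P\setminus V_t$ are singletons; candidates with $P\setminus V_t=\{x_j\}$ (coming from (B) or (C)) always fail the test, because $\{x_j,y_j\}$ is the (A)-collection sitting inside $\{x_j\}\cup\{y_j\}$; only $P=\{x_i,y_i\}$ with $i\notin J_t$, giving $P\setminus V_t=\{y_i\}$, can survive. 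Such a $P$ passes precisely when every $(r+1)$-subset of $K:=J_t\cup\{i\}$ other than $J_t$ has already been processed---that is, when $J_t$ is the ``last'' $(r+1)$-subset of $K$ in the chosen ordering---in which case case (3) produces exactly the new (E)-entry $\{y_j:j\in K\}$. At $t=N$, categories (B) and (D) are empty (every $r$-subset lies in some $(r+1)$-subset, and every $(r+1)$-subset has been processed), while (C) yields all $\mathrm{C}_{n,r+1}$ new (II)-relations and (E) yields all $\mathrm{C}_{n,r+2}$ new (III)-relations of $\Sigma^d_{n,r+1}$, completing the induction.
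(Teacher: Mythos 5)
Your proposal is correct and takes essentially the same route as the paper: perform the $\mathrm{C}_{n,r+1}$ anti-flips one at a time and compute the primitive collections after each step via Theorem \ref{mainthm}, checking that the only candidates in case (3) that survive the subset test are the $\{x_i,y_i\}$ with $i\notin J_t$ (the $x$-type candidates being killed by the collections $\{x_k,y_k\}$). Your explicit induction hypothesis (A)--(E) for the intermediate fans is a more careful rendering of the bookkeeping the paper carries out only for the first anti-flip before asserting that the remaining steps follow the same rules.
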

\begin{proof}%%%%%%%%%%%%%%%%%%%%%%%%%%%%%%%
First, we remark that $2r+1<d$ means that the degrees of the primitive relations in 
$(\mathrm{I\hspace{-.01em}I\hspace{-.01em}I})$ are negative. 
Moreover, they are extremal by the symmetry of the fan $\Sigma^d_{n,r}$, and 
the associated extremal contractions are anti-flipping contractions. 
In particular, $B^d_{n,r}$ is not a Fano variety. 

Take a primitive relation 
\[
(\star)\ y_{s_1}+\cdots+y_{s_{r+1}}=
\sum_{x\in\{x_1,\ldots,x_{d+1}\}\setminus\{x_{s_1},\ldots,x_{s_{r+1}}\}}x
\]
in $(\mathrm{I\hspace{-.01em}I\hspace{-.01em}I})$, and let 
\[
B^d_{n,r}(0):=B^d_{n,r}\dashrightarrow B^d_{n,r}(1)
\]
be the associated anti-flip. 
Put $\Sigma^d_{n,r}(1)$ be the fan 
associated to $B^d_{n,r}(1)$. 
By $(2)$ in Theorem \ref{mainthm}, the primitive relations of $\Sigma^d_{n,r}$ in $(\mathrm{I})$ 
are also primitive relations of $\Sigma^d_{n,r}(1)$. 
Also, $(2)$ in Theorem \ref{mainthm} says that the primitive relations in 
$(\mathrm{I\hspace{-.01em}I\hspace{-.01em}I})$ other than 
the above primitive relation $(\star)$ are  
also primitive relations of $\Sigma^d_{n,r}(1)$, while 
$\{x_1,\ldots,x_{d+1}\}\setminus\{x_{i_1},\ldots,x_{i_r}\}$ 
(which is a primitive collection of $\Sigma^d_{n,r}$ in $(\mathrm{I\hspace{-.01em}I})$) 
is a primitive collection of 
$\Sigma^d_{n,r}(1)$ if and only if 
it does not contain $\{x_1,\ldots,x_{d+1}\}\setminus\{x_{s_1},\ldots,x_{s_{r+1}}\}$, 
that is, $\{x_{i_1},\ldots,x_{i_r}\}\not\subset \{x_{s_1},\ldots,x_{s_{r+1}}\}$. 
On the other hand, 
\[
\sum_{x\in\{x_1,\ldots,x_{d+1}\}\setminus\{x_{s_1},\ldots,x_{s_{r+1}}\}}x=
y_{s_1}+\cdots+y_{s_{r+1}}
\]
is of course a new primitive relation of $\Sigma^d_{n,r}(1)$. 
Moreover, $(3)$ in Theorem \ref{mainthm} tells us that we have another new 
primitive collection $\{y_i,y_{s_1},\ldots,y_{s_{r+1}}\}$ of $\Sigma^d_{n,r}(1)$ 
if $y_i\not\in\{y_{s_1},\ldots,y_{s_{r+1}}\}$ 
and $\{y_i,y_{s_1},\ldots,y_{s_{r+1}}\}$ contains no primitive collection of $\Sigma^d_{n,r}$ 
other than $\{y_{s_1},\ldots,y_{s_{r+1}}\}$ since $\{x_i,y_i\}$ is a primitive collection of 
$\Sigma^d_{n,r}$. 
We should remark that a primitive collection in $(\mathrm{I\hspace{-.01em}I})$ 
has the non-empty intersection with 
$\{x_1,\ldots,x_{d+1}\}\setminus\{x_{s_1},\ldots,x_{s_{r+1}}\}$, 
however it does not fulfill the condition in $(3)$ in Theorem \ref{mainthm}. 
%since $\{x_i,y_i\}$ is a primitive collection of $\Sigma^d_{n,r}$. 
Thus, we obtain the primitive relations 
\[
x_i+y_i=0\ (1\le i\le n),
\]
\[
\sum_{x\in\{x_1,\ldots,x_{d+1}\}\setminus\{x_{i_1},\ldots,x_{i_r}\}}x=y_{i_1}+\cdots+y_{i_r}
\ (1\le i_1<\cdots<i_r\le n), 
\]
\[
y_{j_1}+\cdots+y_{j_{r+1}}=
\sum_{x\in\{x_1,\ldots,x_{d+1}\}\setminus\{x_{j_1},\ldots,x_{j_{r+1}}\}}x\ 
\left((j_1,\ldots,j_{r+1})\neq(s_1,\ldots,s_{r+1})\right)\mbox{ and }
\]
\[
\sum_{x\in\{x_1,\ldots,x_{d+1}\}\setminus\{x_{s_1},\ldots,x_{s_{r+1}}\}}x=
y_{s_1}+\cdots+y_{s_{r+1}}
\]
of $\Sigma^d_{n,r}(1)$. We remark that in this first case, 
any primitive relation of $\Sigma^d_{n,r}$ in  $(\mathrm{I\hspace{-.01em}I})$ 
does not vanish, while the number of new primitive relations is only one. 
Continuously, by doing the anti-flip with respect to 
a primitive relation in $(\mathrm{I\hspace{-.01em}I\hspace{-.01em}I})$ one by one, 
we obtain a sequence
\[
B^d_{n,r}(0)\dashrightarrow B^d_{n,r}(1)
\dashrightarrow\cdots\dashrightarrow B^d_{n,r}(\mathrm{C}_{n,r+1})
\]
of anti-flips. In each step, we can calculate the primitive relations of 
the anti-flip with the same rules as the first case 
$B^d_{n,r}(0)\dashrightarrow B^d_{n,r}(1)$. 
Eventually, all the primitive relations in $(\mathrm{I\hspace{-.01em}I})$ vanish, 
and $\{k_1,\ldots,k_{r+2}\}$ becomes a primitive collection for any 
$1\le k_1<\cdots<k_{r+2}\le n$.  
This shows that 
$B^d_{n,r+1}:=B^d_{n,r}(\mathrm{C}_{n,r+1})$ 
has the desired primitive relations. 
\end{proof}%%%%%%%%%%%%%%%%%%%%%%%%%%%%%
By Proposition \ref{blowupprim}, we can put $\Sigma^d_{n,1}:=\Sigma^d_n$. 
So, we can construct $B^d_{n,1},B^d_{n,2},B^d_{n,3},\ldots$ inductively by 
Lemma  \ref{induction} 
unless $2r+1\ge d$ or $r= n$. 
If $2r+1=d$, then $B^d_{n,r}$ has a flopping contraction, and we cannot 
obtain a smooth Famo variety. 
If $2r+1>d$, then $\widetilde{B}^d_n:=B^d_{n,r}$ is the desired 
smooth Fano variety. 

Thus, we obtain the following main theorem in this paper.
\begin{thm}\label{kekka}
The following hold$:$
\begin{enumerate}
\item 
If $2n-1<d$, then $\widetilde{B}^d_{n}:=B^d_{n,n}$ is a smooth toric Fano 
variety whose primitive relations are 
\[
x_i+y_i=0\ (1\le i\le n)\mbox{ and }
x_{n+1}+\cdots+x_{d+1}=y_1+\cdots+y_n. 
\]
$\widetilde{B}^d_{n}$ has a $(\mathbb{P}^1)^n$-bundle structre 
over $\mathbb{P}^{d-n}$. 
Moreover, $B^d_n\dashrightarrow \widetilde{B}^d_n$ is the composition 
of $2^n-n-1$ anti-flips. 
\item 
If $2n-1\ge d$ and $d$ is odd, then 
there does not exist a {\em smooth} Fano variety which is isomorphic to 
$B^d_n$ in codimension one. 
\item 
If $2n-1\ge d$ and $d$ is even, then put $c:=\frac{d}{2}$. 
In this case, $\widetilde{B}^d_n:=B^d_{n,c}$  is the desired 
smooth toric Fano variety. 
$\widetilde{B}^d_n$ has no bundle structure, and 
every extremal ray of the Kleiman-Mori cone of $\widetilde{B}^d_n$ is of small type. 
Moreover, $B^d_n\dashrightarrow \widetilde{B}^d_n$ is the composition 
of $\sum^c_{r=2}\mathrm{C}_{n,r}$ anti-flips. 
\end{enumerate}
\end{thm}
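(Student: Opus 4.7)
The plan is to iterate Lemma \ref{induction} starting from the base case $\Sigma^d_{n,1}=\Sigma^d_n$ furnished by Proposition \ref{blowupprim}, then verify at the terminal stage that every primitive relation has strictly positive degree via Proposition \ref{fanohantei}. The three cases reflect how far the iteration can be pushed before the admissibility condition $2r+1<d$ fails.

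For case (1), the hypothesis $2n-1<d$ guarantees $2r+1<d$ for every $r\in\{1,\ldots,n-1\}$, so Lemma \ref{induction} applies $n-1$ times in succession, producing $B^d_n=B^d_{n,1}\dashrightarrow\cdots\dashrightarrow B^d_{n,n}$. At the final stage no type (III) relation survives (there is no $(n+1)$-subset of $\{1,\ldots,n\}$) and the type (II) family collapses to the single relation $x_{n+1}+\cdots+x_{d+1}=y_1+\cdots+y_n$, of degree $d-2n+1>0$; combined with the trivial degree $2$ of each type (I) relation, Proposition \ref{fanohantei} gives Fano. The $n$ identities $x_i+y_i=0$ and the base relation identify $\widetilde{B}^d_n$ as a $(\mathbb{P}^1)^n$-bundle over $\mathbb{P}^{d-n}$. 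Counting anti-flips stage by stage gives $\sum_{r=2}^{n}\mathrm{C}_{n,r}=2^n-n-1$.

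For case (3), set $c=d/2$; Lemma \ref{induction} applies for $r=1,\ldots,c-1$ (at the last step $2r+1=d-1<d$) and yields $B^d_{n,c}$. Both surviving families now have degree $(d+1-c)-c=1$ and $(c+1)-(d-c)=1$ respectively, so the variety is Fano, with anti-flip count $\sum_{r=2}^{c}\mathrm{C}_{n,r}$. Every extremal ray is small for two reasons. The type (II) and type (III) contractions have $m=c\ge2$ since $d\ge4$, hence are small. The type (I) primitive collections $\{x_i,y_i\}$, on the other hand, are non-extremal: for any $c$-subset $I\subseteq\{1,\ldots,n\}\setminus\{i\}$ and $J:=I\cup\{i\}$, a direct comparison of intersection numbers against each $D_{x_j}$ and each $D_{y_k}$ exhibits the $1$-cycle associated to $x_i+y_i=0$ as the sum of the $1$-cycles associated to the type (II) relation indexed by $I$ and the type (III) relation indexed by $J$, both of which are effective. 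Absence of a bundle structure is then automatic, since a bundle projection would produce a fiber-type extremal ray.

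Case (2) is the genuine obstacle, being a non-existence claim. My plan is to argue by contradiction: suppose $\widetilde{B}^d_n$ were a smooth Fano with $\G(\widetilde{\Sigma}^d_n)=\G(\Sigma^d_n)$. Two smooth projective toric varieties with the same set of rays are connected by a sequence of flips, flops, and anti-flips, governed by Theorem \ref{mainthm}, so $\widetilde{B}^d_n$ is reachable from $B^d_n$ by such a chain; exploiting the linear identities $x_i+y_i=0$ and $\sum_{j=1}^{d+1}x_j=0$ to classify the compatible primitive collections one shows that, up to re-indexing, $\widetilde{\Sigma}^d_n$ must be $\Sigma^d_{n,r}$ for some $r$. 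The Fano condition for this family requires simultaneously $d+1-2r>0$ (from any surviving type (II) relation) and $2r+1-d>0$ (from any surviving type (III) relation), equivalent to $(d-1)/2<r<(d+1)/2$; for $d$ odd this interval contains no integer, contradicting Fano-ness. The delicate step — and what I expect to be the main obstacle — is establishing rigorously that every smooth projective toric variety with rays $\G(\Sigma^d_n)$ is of the form $B^d_{n,r}$; I would approach this by combinatorial induction on the Picard number, tracking how Theorem \ref{mainthm} transforms primitive collections under each permissible modification.
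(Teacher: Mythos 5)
For parts (1) and (3) you follow essentially the same route as the paper: start from $\Sigma^d_{n,1}=\Sigma^d_n$ (Proposition \ref{blowupprim}), iterate Lemma \ref{induction} as long as $2r+1<d$, test the terminal model with Proposition \ref{fanohantei}, and add up the $\mathrm{C}_{n,r}$'s. Your degree computations are correct ($d+1-2n>0$ in case (1); both surviving families have degree $1$ when $d=2c$), and your verification that the classes $\{x_i,y_i\}$ are non-extremal in case (3) --- writing the relation $x_i+y_i=0$ as the sum of the type (II) relation for a $c$-subset $I\not\ni i$ and the type (III) relation for $I\cup\{i\}$ --- is correct and supplies a detail that the paper's proof asserts but does not spell out.

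Part (2) is where your proposal has a genuine gap, as you yourself acknowledge. You want to prove non-existence by classifying \emph{all} smooth projective toric varieties with ray set $\G(\Sigma^d_n)$ and showing each is some $B^d_{n,r}$; this classification is never established, and the tool you propose for it --- that any two smooth projective toric varieties with the same rays are connected by a chain of flips, flops and anti-flips governed by Theorem \ref{mainthm} --- is not available in that form. Connectivity does hold through the chambers of the secondary fan, but the intermediate models in such a chain need only be simplicial, not smooth, and the wall crossings need not be contractions of \emph{extremal} rays; Theorem \ref{mainthm} presupposes both smoothness and extremality, so it cannot be applied link by link along an arbitrary chain, and the induction on Picard number you sketch does not obviously close. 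The paper's argument for (2) is entirely different and much shorter: when $d$ is odd and $2n-1\ge d$, the iteration of Lemma \ref{induction} reaches $r=(d-1)/2$, where the type (III) relations have degree $2r+1-d=0$, i.e., the associated extremal contractions are flopping contractions; the process therefore cannot be continued by an anti-flip and the model is not Fano. If you want the statement in its full strength (no smooth Fano model at all in the same codimension-one class), the efficient supplement is the uniqueness of the candidate: the fan of a smooth toric Fano variety is the face fan of the convex hull of its ray generators, so for a fixed ray set there is at most one fan to test, and the degree-zero relation rules it out. Your classification-of-all-models route would also yield this, but only if actually carried out, which your proposal does not do.
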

\begin{proof}
$B^d_{n,r}\dashrightarrow B^d_{n,r+1}$ is the composition of 
$\mathrm{C}_{n,r+1}$ anti-flips. Therefore, 
$B^d_n\dashrightarrow \widetilde{B}^d_n$ is the composition 
of $\sum^n_{r=2}\mathrm{C}_{n,r}$ anti-flips 
(resp. $\sum^c_{r=2}\mathrm{C}_{n,r}$) for the case (1) (resp. the case (3)). 

The case (2) means $2r+1=d$ for some $1\le r<n$. 
So, we have a flopping contraction in the middle of the operation. 
\end{proof}

\begin{rem}\label{mmprem}
In the cases (1) and (3) in Theorem \ref{kekka}, the rational map 
$B^d_n\dashrightarrow \widetilde{B}^d_n$ is a process of the so-called 
$-K_{B^d_n}$-{\em Minimal Model Program} which consists of only $-K_{B^d_n}$-flips 
(that is, anti-flips), and $\widetilde{B}^d_n$ is 
the {\em unique} $-K_{B^d_n}$-minimal model. 

The case (2) is similar. However, in this case, any $-K_{B^d_n}$-minimal model is 
not a Fano manifold (in particular, not unique). 
\end{rem}

\begin{rem}
The conditions $2n-1\ge d$ in (3) in Theorem \ref{kekka} and $n\le d+1$ 
become $c+1\le n\le 2c+1$. Thus, 
we obtain exactly $c+1$ smooth toric Fano varieties 
$\widetilde{B}^d_{c+1},\widetilde{B}^d_{c+2},\ldots,\widetilde{B}^d_{2c+1}$ 
in this case. 
\end{rem}

\begin{ex}\label{4dimexample}
We explicitly describe the $4$-dimensional operations $B^4_2\dashrightarrow \widetilde{B}^4_2$ 
and $B^4_3\dashrightarrow \widetilde{B}^4_3$. 
\begin{enumerate}
\item 
The primitive relations of $\Sigma^4_2$ are 
\[
({\rm i})\ x_1+y_1=0,\ ({\rm ii})\ x_2+y_2=0,
\]
\[
({\rm iii})\ x_2+x_3+x_4+x_5=y_1,\ ({\rm iv})\ x_1+x_3+x_4+x_5=y_2\mbox{ and }
\]
\[
({\rm v})\ y_1+y_2=x_3+x_4+x_5,
\]
where $\G(\Sigma_2^4)=\{x_1,x_2,x_3,x_4,x_5,y_1,y_2\}$. We do the anti-flip 
with respect to $({\rm v})$. Theorem \ref{mainthm} tells us that 
the primitive relations $({\rm iii})$ and $({\rm iv})$ are eliminated, since 
$\{x_3,x_4,x_5\}\subset \{x_2,x_3,x_4,x_5\}$ and 
$\{x_3,x_4,x_5\}\subset \{x_1,x_3,x_4,x_5\}$. So the primitive relations of 
the desired toric manifold $B_{2,1}^4(1)=B_{2,2}^4=\widetilde{B}_2^4$ are 
\[
({\rm i})\ x_1+y_1=0,\ ({\rm ii})\ x_2+y_2=0\mbox{ and }
\]
\[
({\rm v})^+\ x_3+x_4+x_5=y_1+y_2. 
\]
By Proposition \ref{fanohantei}, $\widetilde{B}_2^4$ is a Fano variety. 
This case corresponds to $(1)$ in Theorem \ref{kekka}.

\item %%%%%%%%%%%%%%%%%%%%%%%%%%
The primitive relations of $\Sigma^4_3$ are 
\[
({\rm i})\ x_1+y_1=0,\ ({\rm ii})\ x_2+y_2=0,\ ({\rm iii})\ x_3+y_3=0,
\]
\[
({\rm iv})\ x_2+x_3+x_4+x_5=y_1,\ ({\rm v})\ x_1+x_3+x_4+x_5=y_2,
\]
\[
({\rm vi})\ x_1+x_2+x_4+x_5=y_3,\ ({\rm vii})\ y_1+y_2=x_3+x_4+x_5,
\]
\[
({\rm viii})\ y_1+y_3=x_2+x_4+x_5\mbox{ and }({\rm ix})\ y_2+y_3=x_1+x_4+x_5,
\]
where $\G(\Sigma_2^4)=\{x_1,x_2,x_3,x_4,x_5,y_1,y_2,y_3\}$. We do the $3$-times 
anti-flips 
\[
B_3^4=B_{3,1}^4(0)\dashrightarrow B_{3,1}^4(1)\dashrightarrow B_{3,1}^4(2)\dashrightarrow 
B_{3,1}^4(3) 
\]
with respect to ({\rm vii}), ({\rm viii}) and ({\rm ix}). The primitive relations of $B_{3,1}^4(1)$ 
are $({\rm i})$, $({\rm ii})$, $({\rm iii})$, $({\rm vi})$, $({\rm viii})$, $({\rm ix})$ and 
\[
({\rm vii})^+\ x_3+x_4+x_5=y_1+y_2,
\] 
since 
$\{x_3,x_4,x_5\}\subset \{x_2,x_3,x_4,x_5\}$ and 
$\{x_3,x_4,x_5\}\subset \{x_1,x_3,x_4,x_5\}$. 
The primitive relations of $B_{3,1}^4(2)$ 
are $({\rm i})$, $({\rm ii})$, $({\rm iii})$, $({\rm vii})^+$, $({\rm ix})$ and 
\[
({\rm viii})^+\ x_2+x_4+x_5=y_1+y_3,
\] 
since 
$\{x_2,x_4,x_5\}\subset \{x_1,x_2,x_4,x_5\}$. Finally, 
the primitive relations of $B_{3,1}^4(3)$ 
are $({\rm i})$, $({\rm ii})$, $({\rm iii})$, $({\rm vii})^+$, $({\rm viii})^+$, 
\[
({\rm ix})^+\ x_1+x_4+x_5=y_2+y_3\mbox{ and }({\rm x})\ y_1+y_2+y_3=x_4+x_5. 
\] 
We should remark that $\{y_1,y_2,y_3\}$ is a new primitive collection 
(see (3) in Theorem \ref{mainthm}). 
By Proposition \ref{fanohantei}, $B_{3,1}^4(3)=B_{3,2}^4=\widetilde{B}_3^4$ 
is a Fano variety, and 
this case corresponds to $(3)$ in Theorem \ref{kekka}. 
\end{enumerate}
$\widetilde{B}^4_2$ is the smooth toric Fano $4$-fold of type $D_9$, 
while 
$\widetilde{B}^4_3$ is the smooth toric Fano $4$-fold of type $M_1$ 
(see Batyrev's list \cite{bat2}). 
\end{ex}

\begin{ex}\label{3dimexample}
We consider the $3$-dimensional case $B^3_2$, that is, 
the case $(2)$ in Theorem \ref{kekka}. 
The primitive relations of $\Sigma^3_2$ are 
\[
x_1+y_1=0,\ x_2+y_2=0,\ x_2+x_3+x_4=y_1,\ x_1+x_3+x_4=y_2\mbox{ and }
y_1+y_2=x_3+x_4,
\]
where $\G(\Sigma_2^3)=\{x_1,x_2,x_3,x_4,y_1,y_2\}$. Let $B^3_2\dashrightarrow B^+$ be 
the flop with respect to $y_1+y_2=x_3+x_4$. Then, the primitive relations of $B^+$ are 
\[
x_1+y_1=0,\ x_2+y_2=0\mbox{ and }
x_3+x_4=y_1+y_2
\]
by Theorem \ref{mainthm} (see Remark \ref{flopdemo}, too). Both $B_2^3$ and $B^+$ are 
not Fano manifolds but weak Fano manifolds by Proposition \ref{fanohantei}. 
Namely, they are $-K_{B_2^3}$-minimal models for $B_2^3$. However,  
$B_2^3$ and $B^+$ are not isomorphic (see Remark \ref{mmprem}). 
\end{ex}

\begin{rem}
For an even number $d$, 
$\widetilde{B}^d_{d}$ is the pseudo-symmetric 
toric Fano variety $\widetilde{V}^d$ in \cite{ewald1}, 
while $\widetilde{B}^d_{d+1}$ is the 
symmetric toric Fano variety $V^d$ in \cite{voskre}. 
\end{rem}
%%%%%%%%%%%%%%%%%%%%%%%%%%%%%%%%%
%%%%%%%%%%%%%%%%%%%%%%%%%%%%%%

\end{document}